\newtheorem{proposition}{Proposition}
\newtheorem{definition}{Definition}
\newtheorem{remark}{Remark}
\def\J{\mathcal{J}}
\def\P{\mathcal{P}}
\begin{document}

\title{Fuss-Catalan  Triangles}

\author{F. Aicardi}
 \address{Sistiana 56, Trieste IT}
 \email{francescaicardi22@gmail.com}

\begin{abstract}  For  each $p>0$  we  define by  recurrence  a  triangle   $T^p(n,k)$ whose  rows    sum to the Fuss-Catalan numbers $ \frac{1}{p n+1}\binom{pn+1}{n}$, generalizing the known Catalan triangle corresponding to the case $p=2$. (In fact, $T^p(n,k)$  has  an explicit formula  counting simple lattice paths). Moreover, for  some small values of  $p$,  the  signed  sums  turn out to be  known sequences.
\end{abstract}

\keywords{}

\subjclass{05A10,05A19,57M50}

\date{}
\maketitle

\section{Introduction}
The  Catalan numbers
$$   C_n= \frac{1}{n+1}\binom{2n}{n}$$
can be  obtained  by  adding the  rows of the  Catalan triangle $T(n,k)$  defined  by the following   recurrence and initial  conditions:
\begin{equation}\label{tr1}
T(0,0)=1, \quad
  T(n,0)=1,\quad  T(n,n)=0, \quad  \quad n>0 ;
  \end{equation}
  \begin{equation} \label{tr2}  T(n,k)= T(n,k-1)+T(n-1, k), \quad   \quad  n>1, \quad 0<k < n .
 \end{equation}

 Observe that  recurrence (\ref{tr2}) is  equivalent to  recurrence:
  \begin{equation} \label{tr3}  T(n,k)= \sum_{j=0}^k T(n-1, j), \quad   \quad  n>1, \quad 0<k < n .
 \end{equation}

Now, let's denote  $A^p_n$ the  Fuss-Catalan numbers
 $$A^p_n:= \frac{1}{p n+1}\binom{pn+1}{n},$$
 generalizing the  Catalan numbers, being $C_n=A^2_n$.

 In \cite{Aic},  we have defined  by  recurrence  a  triangle  whose  n-th row row  sums to  $A^4_n$, see  also \cite{Oeis}.  Such  a triangle was not seen as  a generalization of the triangle  $T(n,k)$,  as  in fact  it was.

 In the  next  section  we generalize  both  recurrences \ref{tr2}  and  \ref{tr3}  defining,  for each integer $p>0$,  a triangle whose   rows  add to  $A^p_n$. The triangle  $T(n,k)$ corresponds to case $p=2$.

\section{  Fuss-Catalan triangles   }

 For  each $p>0$, we define  a  triangle   $T^p(n,k)$.

\begin{definition}\rm  For  every  $p>0$,  the  integers $T^p(n,k)$  are  given for  $n\ge 0$,  $0\le k \le n$  by the initial conditions
\begin{equation}\label{E0}
T^p(0,0)=1; \quad
 T^p(n,n)=0 \quad \text{ for} \quad  n>0
  \end{equation}
  and  the  recurrence, for $n>0$  and  $0\le  k<n$:
 \begin{equation} \label{E2}   T^p(n,k)=  \sum_{j=0}^{k}  \binom{k-j+p-2}{p-2} T^p(n-1, j).
 \end{equation}
\end{definition}

\begin{proposition}
\label{the2}\rm  The  integers $T^p(n,k)$  can be  defined for  $n\ge 0$, $1-p \le k \le n$  by the initial conditions \ref{E0} together  with
\begin{equation}\label{E1}
 \quad T^p(n,-j)=0  \quad \text{ for} \quad  j=1,\dots, p-1 ,
  \end{equation}
  and  the  recurrence for $n>0$  and  $0\le  k<n$:
 \begin{equation} \label{E3}   T^p(n,k)= T^p(n-1, k)+ \sum_{j=1}^{p-1} (-1)^{j-1} \binom{p-1}{j} T^p(n,k-j).
 \end{equation}
 \end{proposition}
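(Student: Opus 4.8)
The plan is to read both recurrences as statements about the row generating function $g_n(x):=\sum_{k\ge 0}T^p(n,k)\,x^k$, and to exploit that $(1-x)^{p-1}$ and $(1-x)^{-(p-1)}$ are mutually inverse formal power series. Indeed, \eqref{E2} says that the row $g_n$ is obtained from $g_{n-1}$ by multiplication by $(1-x)^{-(p-1)}$, whereas \eqref{E3} says that multiplication of $g_n$ by $(1-x)^{p-1}$ recovers $g_{n-1}$; so the two recurrences are formally equivalent, and the work is to make this rigorous at the level of coefficients, with the boundary convention built in.

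First I would record the single identity that does all the work. With the convention $\binom{m+p-2}{p-2}=0$ for $m<0$, the numbers $\binom{m+p-2}{p-2}$ are the coefficients of $(1-x)^{-(p-1)}$, while $(-1)^i\binom{p-1}{i}$ (zero for $i\ge p$) are the coefficients of $(1-x)^{p-1}$. Since the product of these series is $1$, their convolution is the Kronecker delta:
\begin{equation*}
\sum_{i=0}^{p-1}(-1)^i\binom{p-1}{i}\binom{m-i+p-2}{p-2}=\delta_{m,0},\qquad m\ge 0.
\end{equation*}

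Next, starting from \eqref{E2}, extended to a negative second argument by the empty-sum convention (which reproduces \eqref{E1} exactly, since $\binom{k-j+p-2}{p-2}$ already vanishes once $k-j<0$), I would form the alternating combination from \eqref{E3} for $0\le k<n$, substitute \eqref{E2} into each term, and interchange the two summations:
\begin{equation*}
\sum_{i=0}^{p-1}(-1)^i\binom{p-1}{i}\,T^p(n,k-i)
=\sum_{j=0}^{k}T^p(n-1,j)\sum_{i=0}^{p-1}(-1)^i\binom{p-1}{i}\binom{k-j-i+p-2}{p-2}.
\end{equation*}
By the displayed identity the inner sum equals $\delta_{k-j,0}$, so the right-hand side collapses to $T^p(n-1,k)$. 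Isolating the $i=0$ term and moving the remaining terms to the right gives precisely \eqref{E3}; the cases $k=0$ and the extended entries need nothing beyond the empty-sum convention, so no separate base case arises.

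Finally, to conclude that \eqref{E0}, \eqref{E1}, \eqref{E3} genuinely \emph{define} the same triangle, I would prove uniqueness by induction on $n$: row $0$ is fixed by \eqref{E0}; assuming row $n-1$ known, the values $T^p(n,-1),\dots,T^p(n,-(p-1))$ are fixed by \eqref{E1}, and then \eqref{E3} read at $k=0,1,\dots,n-1$ computes $T^p(n,k)$ one index at a time (each new value using only $T^p(n-1,k)$ and the already-determined entries $T^p(n,k-1),\dots,T^p(n,k-p+1)$), while $T^p(n,n)=0$ is supplied by \eqref{E0}. Hence the solution is unique, and since the originally defined triangle satisfies all three conditions it must coincide with it. I expect the only delicate point to be the boundary bookkeeping: one must verify that extending the convolution \eqref{E2} to $k-i<0$ yields exactly the zeros prescribed by \eqref{E1}, and that the convention $\binom{m+p-2}{p-2}=0$ for $m<0$ is applied consistently, so that the interchange of summations and the inverse-series identity leave no stray boundary terms.
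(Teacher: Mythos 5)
Your proof is correct and takes essentially the same route as the paper: your Kronecker-delta convolution identity for the coefficients of $(1-x)^{p-1}$ and $(1-x)^{-(p-1)}$ is exactly the paper's binomial identity (\ref{E4}) (applied with $h=p-2$, $m=k-j$) rearranged, and the substitute-and-interchange computation matches. The paper's proof is simply a terser version of yours; the explicit uniqueness argument and boundary bookkeeping you supply are details the paper leaves implicit.
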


\begin{proof} The  equivalence  of  equations  (\ref{E3})  and  (\ref{E2}) is deduced  from the  following  identity  satisfied  by  the  binomial  coefficients:
 \begin{equation}\label{E4}
 \binom{m+h}{h}= \sum_{r=1}^{h+1}(-1)^{r-1}\binom{h+1}{r} \binom{m+h-r}{h},
  \end{equation}
 by setting  $h=p-2$  and  $m=k-j$.
\end{proof}

\begin{remark}\rm Observe that,  for  $p=2$,  recurrence (\ref{E3}) coincides  with recurrence (\ref{tr2}), and
recurrence (\ref{E2}) coincides  with recurrence (\ref{tr3}).
\end{remark}

The  following  proposition  has  been proved for $p=4$  in  \cite{Aic}.  For the other values of  $p>2$  see  Remark \ref{formul}.
\begin{proposition}  \label{P3}
The  integers $T^p(n,k)$    satisfy,  for  every  $n>0$
\begin{equation}\label{f3}  \sum_{k=0}^{n} T^p(n,k)=   A_n^p.  \end{equation}
  \end{proposition}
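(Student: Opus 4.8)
The plan is to first establish a closed form for the entries and then sum it. Explicitly, I claim that for $n\ge 1$ and $0\le k\le n$,
\begin{equation}\label{closed}
T^p(n,k)=\binom{(p-1)n+k-1}{k}-(p-1)\binom{(p-1)n+k-1}{k-1},
\end{equation}
a form which already incorporates the boundary value $T^p(n,n)=0$, since the two terms coincide when $k=n$. Once (\ref{closed}) is known, the row sum collapses by two applications of the hockey-stick identity, and the statement follows from an elementary manipulation of binomial coefficients.

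To prove (\ref{closed}) I would induct on $n$, reading recurrence (\ref{E2}) as a coefficient extraction. The crucial observation is the generalized binomial series $\sum_{j\ge 0}\binom{a+j-1}{j}x^{j}=(1-x)^{-a}$, together with $\sum_{i\ge 0}\binom{i+p-2}{p-2}x^{i}=(1-x)^{-(p-1)}$. Assuming (\ref{closed}) for row $n-1$, the first term $\binom{(p-1)(n-1)+j-1}{j}$ has generating function $(1-x)^{-(p-1)(n-1)}$, so its contribution to $\sum_{j=0}^{k}\binom{k-j+p-2}{p-2}T^p(n-1,j)$ equals
\begin{equation}
[x^{k}]\,(1-x)^{-(p-1)}(1-x)^{-(p-1)(n-1)}=[x^{k}]\,(1-x)^{-(p-1)n}=\binom{(p-1)n+k-1}{k}.
\end{equation}
The second term of (\ref{closed}) for row $n-1$ carries an extra factor $x$ in its generating function, and the identical computation yields exactly $(p-1)\binom{(p-1)n+k-1}{k-1}$. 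Hence the recurrence reproduces (\ref{closed}) for row $n$; the base case $n=1$ is immediate from $T^p(1,0)=1$. Observe that although the index $j$ in (\ref{E2}) runs only up to $k\le n-1$, the operator $[x^{k}]$ automatically discards all terms of degree exceeding $k$, so passing to the full power series causes no harm.

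With (\ref{closed}) in hand, I would sum over $0\le k\le n$. The hockey-stick identity gives $\sum_{k=0}^{n}\binom{(p-1)n+k-1}{k}=\binom{pn}{n}$ and $\sum_{k=0}^{n}\binom{(p-1)n+k-1}{k-1}=\binom{pn}{n-1}$, whence
\begin{equation}
\sum_{k=0}^{n}T^p(n,k)=\binom{pn}{n}-(p-1)\binom{pn}{n-1}.
\end{equation}
Finally, using $\binom{pn}{n-1}=\frac{n}{(p-1)n+1}\binom{pn}{n}$ one obtains $\sum_{k}T^p(n,k)=\frac{1}{(p-1)n+1}\binom{pn}{n}$, which is the standard rewriting of $A_n^p=\frac{1}{pn+1}\binom{pn+1}{n}$. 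The only genuinely non-obvious step is discovering the closed form (\ref{closed}); once it is cast in this two-term shape, every subsequent identity is routine, and in particular the inductive step reduces to the additivity of exponents in $(1-x)^{-a}$.
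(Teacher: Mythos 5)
Your proof is correct, and it takes a genuinely different route from the paper: the paper does not actually prove Proposition \ref{P3} in-line --- it cites \cite{Aic} for the case $p=4$ and otherwise points to Remark \ref{formul}, where the closed form $T^p(n,k)=\frac{(p-1)(n-k)}{(p-1)n+k}\binom{(p-1)n+k}{k}$ is attributed to a private communication and to a lattice-path enumeration theorem. Your two-term expression $\binom{(p-1)n+k-1}{k}-(p-1)\binom{(p-1)n+k-1}{k-1}$ is algebraically identical to that closed form (write $N=(p-1)n$ and use $\binom{N+k-1}{k}=\frac{N}{N+k}\binom{N+k}{k}$ and $\binom{N+k-1}{k-1}=\frac{k}{N+k}\binom{N+k}{k}$, so the difference is $\frac{N-(p-1)k}{N+k}\binom{N+k}{k}$), so what you have done is supply the verification that the paper omits. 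Your inductive step is sound: the sum in recurrence (\ref{E2}) is exactly the coefficient of $x^k$ in a product of power series, the exponents $-(p-1)(n-1)$ and $-(p-1)$ add to $-(p-1)n$, the shifted second term carries the extra factor of $x$, and the truncation of the index $j$ at $k\le n-1$ is harmless because the induction hypothesis covers all $j\le n-1$. The concluding hockey-stick sums together with $\binom{pn}{n-1}=\frac{n}{(p-1)n+1}\binom{pn}{n}$ give $\frac{1}{(p-1)n+1}\binom{pn}{n}$, which is indeed equal to $\frac{1}{pn+1}\binom{pn+1}{n}=A^p_n$. What your approach buys is a self-contained, purely algebraic proof valid for all $p>0$; what the paper's route buys is brevity and a combinatorial interpretation of the individual entries as counts of lattice paths below the line $x=(p-1)y$.
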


\begin{remark} \rm Note that  $A^1_n=1$  for  every $n$, and    $T^1(n,0)=1$ for $n>0$, while $T^1(n,k)=0$  for every $k>0$.
\end{remark}
Examples.
For  $p=5$  the  recurrence (\ref{E2}) and (\ref{E3}) are  respectively
$$ T^5(n,k)= T^5(n-1, k)+  \binom{4}{3}  T^5(n-1,k-1) + \binom{5}{3}  T^5(n-1,k-2) + \dots + \binom{k+3}{3} T^5(n-1,0) $$
and
$$ T^5(n,k)= T^5(n-1, k)-  4 T^5(n,k-1) + 6  T^5(n,k-2) -4 T^5(n,k-3)+ T^5(n,k-4).$$

Here  the  triangle $T^5(n,k)$  for  $n\le 10$,  $0\le k\le n$ (zeroes at left are omitted)
{\tiny
 $$\begin{array}{l|lllllllllll  |l}
          n   \backslash k &    & 0   & 1  &  2       &  3     & 4      &     5  &  6      &   7     & 8    &9  & sum \\ \hline
          0&    & 1  &   &       &     &      &       &        &        &        &        &  1  \\
          1&    & 1  & 0  &       &     &      &       &        &        &        &        & 1\\
          2&    & 1  &  4 &  0    &     &      &       &        &        &        &        & 5\\
          3&    &1   &  8 &  26  &  0   &      &       &        &        &        &        & 35\\
          4&    & 1  &  12&  68  &  204&  0    &       &        &        &        &        &  285\\
          5&    & 1  &  16&  126 &  616&  1771&  0     &        &        &        &        &  2530\\
          6&    &1   &  20 &  200 &  1300&  5850&    16380& 0      &       &        &        &  23751\\
          7&    &1  &   24 &  290 &    2320&   13485&    57536  &158224  &   0     &        &        & 231880\\
          8&    &1  &  28&  396   &  3740  &  26180  &  141372   &  581196&  1577532&   0    &        & 2330445\\
          9&    & 1 &  32&  518   &  5624   &  45695  &  292448  &  1498796&  5995184&  16112057&  0   & 23950355\\
          10&   & 1&  36&  656    &  8036    &  74046 &  543004  &  3258024&  16057404&  62891499& 167710664&  250543370\\
 \end{array}$$
    }
\begin{remark} \label{formul}\rm   An explicit formula for $T^p(n,k)$   (see  \cite{Kratt1}) is
     \begin{equation} T^p(n,k) =  \frac{(p-1)(n-k)}{(p-1)n+k } \binom {(p-1)n+k}{k},  \end{equation}
     giving   the number of  simple plane  lattice paths from     $(0,0)$  to  $((p-1)n-1,k)$ staying below the line $x=(p-1)y$, see \cite[Theorem 10.4.5]{Kratt2}. 
 \end{remark}

\section{Some  applications}

\subsection{Fuss-Catalan triangles  and   double planar partitions.  }
Here   we  show  how  the Fuss-Catalan triangles  $T^3$  and  $T^4$ enters two  problems  of  counting double planar partitions.

A set partition of $[n]$  can be  represented   by a  diagram of  arcs:   $n$ points on a  segment represent $[n]$  and  an arc connecting points $i$  and  $j$  means that  $i$ and  $j$  belong to  the  same block of the partition.  The  uniqueness of  the standard arc diagram is obtained  by avoiding the  arc connecting $i$ and $j$ ($i<j$) if in the  same block of $i$  and $j$ there is  $k$  such  that  $i<k<j$. So, a partition of  $n$  has  $0\le m\le   n-1$  arcs,  and  $n-m$ blocks.

A  partition  of $[n]$ is  said planar  if  the  arcs  do  not cross  each other. Let $\P_n$  be   the  set  planar partitions of  $[n]$.

A  double partition  is  a  pair  of partitions $(P_1,P_2)$  such  that  $P_1$ is  a  refinement of  $P_2$, i.e., each block of  $P_1$  is  contained in a  block of  $P_2$.

In a  double  planar partition  both  $P_1$  and  $P_2$  are planar.

A  double partition  (see  \cite{AAJ})  can  be  represented  by  a unique  diagram of  arcs  and  ties, the ties  being  drown  as dotted  arcs.   The  arcs represent the partition $P_1$.  A set of points  connected  by  arcs  is  a  block of  $P_1$   and    a  tie  between   two  blocks means  that  these  blocks  are contained in  the same block  of  $P_2$.

In a  diagram  of  arcs and  ties we introduce  the  definition of  box. Consider the half-plane  where  the  arcs and the ties lie.  The complement of the  set of points,  arcs and  ties to this  half-plane,   consists of  open regions;  only one of these regions,  say $U$,   is  unbounded.  A box  is either a point or  a connected  set of points, arcs, ties and  bounded regions  that borders only with $U$ in this  half plane, see Figure \ref{F1}.   A  diagram of  arcs and  ties  of  a  double planar partition of $[n]$  has  $1\le k\le n$  boxes.

\begin{figure}[H]
 \centering
 \includegraphics[scale=0.5]  {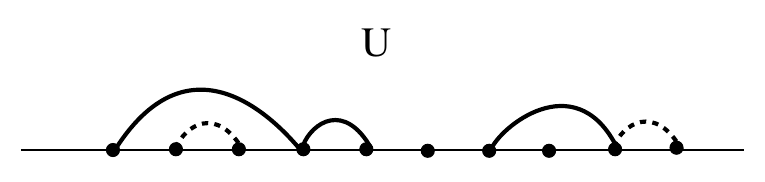}
 \caption{A diagram with arcs and ties representing a double partition of $[10]$ with  3  boxes}\label{F1}
\end{figure}

In \cite{Aic}  we  considered the  set $\J_n \subset \P_n$   having all  blocks of  two elements:  their  diagrams consist of  $n$  separate  arcs.  The cardinality of  $\J_n$ is  $C_n$  (see \cite{Jon}) and  we proved that  the  triangle $T(n,k)$ counts the  number of  such partitions  whose  diagrams have  $n-k$  boxes.

Moreover,  we  considered    the set of double  planar partitions $(P_1,P_2)$,  where  $P_1 \in J_n$  and  $P_2$ is planar. We showed that  its  cardinality   is  $A_n^4$,  and  that  the  triangle  $T^4(n,k)$  counts  the number  of  such double  partitions whose diagrams  of arcs and  ties  have  $n-k$  boxes.

It is  known that  the  Catalan number  $C_n$ counts also the cardinality of  $\P_n$.  Now,  let  $\P\P_n$ be the set   of  double planar partitions $(P_1,P_2)$,  with  $P_1,P_2\in \P_n$.

\begin{proposition}   The triangle $T^3(n,k)$  counts  the number  of planar double  partitions whose diagrams  of arcs and  ties  have  $n-k$  boxes. \end{proposition}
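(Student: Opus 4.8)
The plan is to compute the box statistic on $\P\P_n$ by decomposing a diagram into its boxes, reading off a generating function governed by the Fuss--Catalan functional equation, and then matching the outcome against the explicit formula of Remark \ref{formul}. Throughout I write $k=n-b$, so that $T^3(n,k)$ should count the diagrams on $[n]$ with $b$ boxes.

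First I would settle the combinatorics of boxes. I claim that a box is exactly a connected component of the diagram once every bounded region is filled in, and that — crucially because both $P_1$ and $P_2$ are planar — these components occupy pairwise disjoint \emph{consecutive} intervals of $[n]$, separated precisely by the gaps at which the unbounded region $U$ reaches the baseline. (If a point $j$ lay outside a box while points $i<j<k$ lay inside it, some arc or tie would span over $j$, and the bounded region it encloses would attach $j$ to the box — contradiction.) Consequently a planar double partition with $b$ boxes is the same datum as an ordered composition $n=m_1+\cdots+m_b$ together with a \emph{connected} planar double partition (a single box) on each interval; restriction to an interval is again a planar double partition, no $P_2$-block can straddle two intervals, and juxtaposition recreates exactly the prescribed boxes. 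Hence, writing $N(n,b)$ for the number of diagrams on $[n]$ with $b$ boxes and $c_m$ for the number of connected ones on $[m]$,
\[
N(n,b)=[x^n]\,C(x)^b,\qquad C(x):=\sum_{m\ge 1}c_m\,x^m .
\]

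Next I would identify $C$. Let $B(x)=\sum_{n\ge 0}A^3_n x^n$; the Fuss--Catalan identity gives $B=1+xB^3$. Summing the display above over $b\ge 1$ yields $\sum_{n\ge 1}|\P\P_n|\,x^n=C/(1-C)$, and since $|\P\P_n|=A^3_n$ — the classical count of pairs $P_1\le P_2$ of noncrossing partitions, i.e.\ $2$-multichains in the lattice $NC(n)$ — the right side equals $B-1$. Solving gives $C=1-1/B=xB^2$. Applying the Fuss--Catalan power formula $[x^j]B^r=\frac{r}{3j+r}\binom{3j+r}{j}$ (the case $r=1$ recovers $A^3_j$) then produces
\[
N(n,b)=[x^{\,n-b}]\,B^{2b}=\frac{2b}{3n-b}\binom{3n-b}{n-b}.
\]
Substituting $b=n-k$ turns this into $\frac{2(n-k)}{2n+k}\binom{2n+k}{k}$, which is exactly the explicit value of $T^3(n,k)$ recorded in Remark \ref{formul}; as a diagram on $[n]$ has $b=n-k$ boxes, the proposition follows.

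The main obstacle is the first, geometric step: rigorously proving that boxes coincide with interval-indexed connected components and that the count factors as a genuine product over the composition (the independence of distinct boxes). This is precisely where planarity of \emph{both} $P_1$ and $P_2$ is indispensable, and it also underlies the input $|\P\P_n|=A^3_n$ used to pin down $C=xB^2$. An alternative, more hands-on route would bypass generating functions by verifying that $N(n,b)$ obeys recurrence \ref{E2} directly via an ``adjoin the rightmost point $n$'' operation: the new point either opens a fresh box or merges with a suffix of the existing boxes, and tracking that the number of admissible attachments taking $b'$ boxes to $b$ equals the coefficient $\binom{k-j+p-2}{p-2}=k-j+1$ (here $p=3$) would reprove the statement; but controlling these attachment multiplicities combinatorially is decidedly more delicate than the generating-function argument, so I would keep the decomposition approach as the primary one.
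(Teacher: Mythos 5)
Your argument is correct, but it takes a genuinely different route from the paper. The paper proves the statement by verifying the defining recurrence directly: it shows that a diagram on $[n]$ with $k>0$ boxes arises from a diagram on $[n-1]$ with $j$ boxes (for $k-1\le j\le n-1$) in exactly $j-k+2$ ways by adjoining the point $n$ together with suitable arcs and ties, so that $F^n_k=\sum_{j\ge k-1}(j-k+2)F^{n-1}_j$, which after the substitution $F'(n,k)=F^n_{n-k}$ is precisely recurrence (\ref{E2}) with $p=3$. That is exactly the ``hands-on'' alternative you flag at the end as more delicate and set aside; the paper dispatches the multiplicity count $j-k+2$ with a figure, and its geometric content is of the same order of difficulty as your interval-decomposition lemma, so you do not actually save on rigor by avoiding it. What your route buys is a clean closed form $N(n,b)=\frac{2b}{3n-b}\binom{3n-b}{n-b}$ via the composition-of-connected-boxes factorization and Lagrange inversion, at the cost of two external inputs that the paper's proof does not need: first, the identity $|\P\P_n|=A^3_n$ (Edelman's count of $2$-multichains in $NC(n)$ --- legitimate and not circular, but note the paper obtains this cardinality as a \emph{corollary} of the proposition via Proposition \ref{P3}, so you are reversing the paper's logical flow and should cite the multichain result explicitly); second, the explicit formula of Remark \ref{formul}, which the paper only quotes from a private communication, so your proof is complete only modulo verifying that $\frac{2(n-k)}{2n+k}\binom{2n+k}{k}$ satisfies (\ref{E0}) and (\ref{E2}) for $p=3$. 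The paper's argument, by contrast, is self-contained relative to the definition of $T^3(n,k)$ and uses neither the closed form nor the value of $|\P\P_n|$.
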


 \begin{proof} This follows from an  argument  analogous  to  that  used  in \cite{Aic}. Firstly, observe that a  diagram  with $n$ points and  zero  boxes does not exist,  while there is  exactly one   diagram - the  void  one - with  $0$ points  and  $0$ boxes,  so that, calling  $F^n_k$ the number of diagrams  with $n$ points   and  $k$  boxes,
  \begin{equation}\label{incon} F^0_0=1 ,  \quad    F^n_0=1. \end{equation}
   Consider  now the diagram of a double planar partition with $n$ points   and  $k>0$  boxes.  It  can be  obtained  by adding a point  and possibly arcs  and  ties  to  diagrams with  $n-1$ points  and  a  number  $j$  of boxes  from  $k-1$  to   $n-1$.  The  number of possibilities  depends on the  difference  $j-k$, and precisely  is $j-k+2$, see  Figure \ref{F2}.

\begin{figure}  [H]
 \centering
 \includegraphics[scale=0.5]{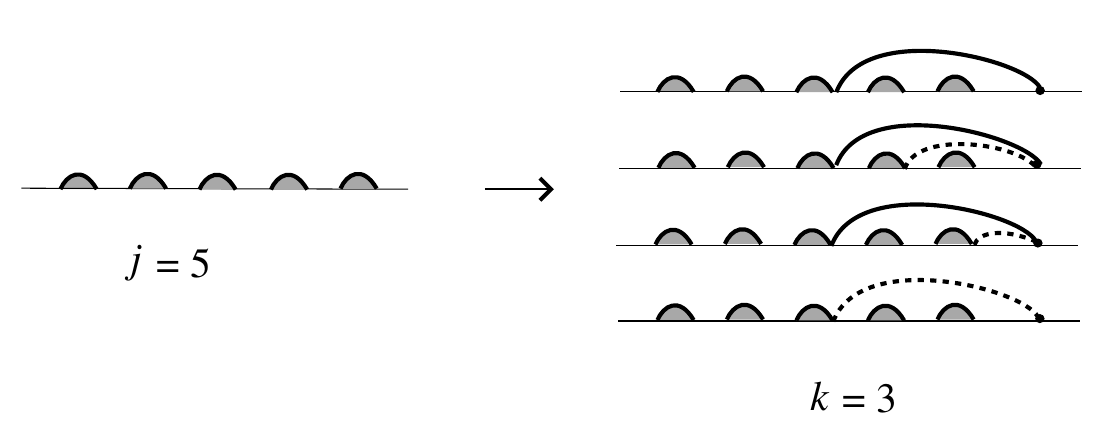}
 \caption{From a diagram in $\P\P_{n-1}$ with  $j=5$  boxes    we get  $j-k+2=4$ diagrams in $\P\P_n$ with  $k=3$ boxes }\label{F2}
\end{figure}
Therefore,   we get the  recurrence
$$ F^n_k=  \sum_{j=k-1}^{n-1} (j-k+2) F^{n-1}_j.$$
  Setting  $ F'(n,k):= F^n_{n-k}$,    we  get  from (\ref{incon}) $$ F'(0,0)=1 ,  \quad    F'(n,n)=0 $$
and   $$ F'(n,k)=  \sum_{j=0}^{k} (k-j+1) F'(n-1,j)  $$
  that    coincide   with eqs.  (\ref{E0}),(\ref{E2})  when  $p=3$.
\end{proof}
The cardinality of  $\P\P_n$  is  $A^3_n$  according to Proposition \ref{P3}.

\subsection{Signed sums}
It is known that  the  the  sums  of  the  elements of  the lines of  the  Catalan triangle  with  alternate signs give the  sequence  OEIS A000957.

 For  $p=3$,  the signed sum of  the elements of  the lines   $T^3(n,k)$  coincides  with  the  sequence  OEIS  A121545.

Here  the first 20 elements of  the signed  sums
$$ \sum_{k=0}^n (-1)^{n+k+1} T^p(n,k)$$  for  $1\le p\le 10$, $n \ge 0$:

{\small
 $ p=1: \ 1, -1, 1, -1, 1, -1, 1, -1, 1, -1, 1, -1, 1, -1, 1, -1, 1, -1, 1, -1...;$

$p=2: \ 1, 0, 1, 2, 6, 18, 57, 186, 622, 2120, 7338, 25724, 91144,325878, 1174281, 4260282, \\ 15548694, 57048048, 210295326,778483932... $

$p=3: \ 1, 1, 4, 17, 81, 412, 2192, 12049, 67891, 390041, 2276176, 13455356, 80402284, 484865032,\\
      2947107384, 18036248337,111046920567, 687345582787, 4274642610932, 26697307240777...
$

$ p= 4: \ 1, 2, 10, 60, 401, 2864, 21400, 165220, 1307620, 10552302,86499898, 718258092, 6028798270,\\
 51069425360, 436027711460,3748369529060, 32417585857772, 281855861501960, \\
 2462233783684360, 21601019155153160...$

$ p=5: \ 1, 3, 19, 147, 1266, 11649, 112120, 1114899, 11363779, 118094918, 1246589202, 13329000273,\\
 144060795984,1571321660184, 17274285045024, 191207254734611,2129186692124169,\\ 23835659616939801,
  268097631543422205,3028297833564071142...$

$ p=6: \ 1, 4, 31, 294, 3101, 34930, 411454, 5006462, 62435002,793804504, 10250825256, 134081332728, \\
1772711804059,23652022623548, 318057452070956, 4306309389830270,58654896651059898,\\ 803161037803720456,
 11049671703743489677,152662559948684096794...$

$ p= 7: \ 1, 5, 46, 517, 6456, 86054, 1199192, 17259269, 254564065, 3827611400, 58451445218, \\
904085802598, 14134199260676,222988679854904, 3545629711151376, 56762260319982341, \\ 914155220284080247,
 14800452986865247637, 240754641356101075792, 3932851435052726675272...$

 $ p= 8: \ 1, 6, 64, 832, 12006, 184848, 2974644, 49431816, 841737889,14610802796, 257565095316,\\
  4598660646472, 82987181003624, 1511233062645312, 27735977030373144, 512513462764716936,\\
  9526999968453976488, 178031888229562475262, 3342570896269418935944, 63022232949507822412392...$

 $ p=9: \ 1, 7, 85, 1255, 20551, 358915, 6550279, 123430375, 2383118590,46899735857, 937329679716,\\
  18972826484515, 388147257987230, 8012971961953665, 166715436675313360, \\3492227597228944871,
73589192430662773815, 1558880084609150482930,\\ 33177952185404603834690, 709111018877519274538705...$

$ p=10: \ 1, 8, 109, 1802, 33016, 644930, 13162171, 277323266,5986509067, 131716300848,\\ 2942973888251,
 66594588347396,1523025237247008, 35148006960677962, \\ 817473524614463614,19141969237892705538,
  450900300072682301930, \\ 10677226925045765840624, 254022383016749006834713,6068915034389074878940072...$
}

  \end{document}